\newcommand{\diam}{\operatorname{diam}}
\newcommand{\Dir}{\operatorname{Dir}}
\newcommand{\dvol}{\operatorname{dvol}}
\newcommand{\Image}{\operatorname{Im}}
\newcommand{\R}{{\mathbb R}}
\newcommand{\supp}{\operatorname{supp}}
\numberwithin{equation}{section}
\theoremstyle{plain}
\newtheorem{lemma}[equation]{Lemma}
\newtheorem{theorem}[equation]{Theorem}
\newtheorem{proposition}[equation]{Proposition}
\theoremstyle{definition}
\theoremstyle{definition}
\newtheorem{example}[equation]{Example}
\theoremstyle{definition}
\newtheorem{remark}[equation]{Remark}
\def\<{\langle}
\def\>{\rangle}
\def\({\left(}
\def\){\right)}
\begin{document}

\title{On tangent cones in Wasserstein space}
\author{John Lott}
\address{Department of Mathematics\\
University of California - Berkeley\\
Berkeley, CA  94720-3840\\ 
USA} \email{lott@berkeley.edu}

\thanks{Research partially supported
by NSF grant DMS-1207654 and a Simons Fellowship}
\date{August 6, 2016}
\subjclass[2000]{}

\begin{abstract}
If $M$ is a smooth compact Riemannian manifold, let $P(M)$ 
denote the Wasserstein
space of probability measures on $M$. If $S$ is an embedded
submanifold of $M$, and $\mu$ is an absolutely continuous measure on $S$,
then we compute the tangent cone of $P(M)$ at $\mu$.
\end{abstract}

\maketitle

\section{Introduction} \label{section1}

In optimal transport theory, a displacement interpolation is a
one-parameter family of measures that represents the most efficient
way of displacing mass between two given probability measures.
Finding a displacement interpolation between two
probability measures
is the same as finding a
minimizing geodesic in the space of probability
measures, equipped with the Wasserstein metric $W_2$
\cite[Proposition 2.10]{Lott-Villani (2009)}.
For background on optimal transport and Wasserstein space, we refer
to Villani's book \cite{Villani (2009)}.

If $M$ is a compact connected Riemannian manifold with
nonnegative sectional curvature
then $P(M)$ is a compact length space with nonnegative curvature
in the sense of Alexandrov
\cite[Theorem A.8]{Lott-Villani (2009)},
\cite[Proposition 2.10]{Sturm (2006)}.
Hence one can define the tangent cone $T_{\mu} P(M)$
of $P(M)$ at a measure $\mu \in P(M)$.
If $\mu$ is absolutely continuous with respect to the volume form
$\dvol_M$ then $T_{\mu} P(M)$ is a Hilbert space
\cite[Proposition A.33]{Lott-Villani (2009)}. More generally, 
one can define tangent cones of $P(M)$ without any curvature assumption
on $M$,
using Ohta's $2$-uniform structure on $P(M)$ \cite{Ohta (2009)}.
Gigli showed that
$T_{\mu} P(M)$ is a Hilbert space if and only if $\mu$ is a ``regular''
measure, meaning that it gives zero measure to any hypersurface which,
locally, is the graph of the difference of two convex functions
\cite[Corollary 6.6]{Gigli (2011)}. 
It is natural to ask
what the tangent cones are at other measures. 

A wide class of tractable measures comes from submanifolds.
Suppose that $S$ is a smooth embedded submanifold of a
compact connected Riemannian manifold $M$. Suppose that
$\mu$ is an absolutely continuous probability measure on $S$. We can
also view $\mu$ as an element of $P(M)$. For simplicity, we assume that
$\supp(\mu) = S$. 

\begin{theorem} \label{theorem1.1}
We have
\begin{equation} \label{1.2}
T_{\mu} P(M) = H \oplus \int_{s \in S} P_2(N_sM) \: d\mu(s),
\end{equation}
where
\begin{itemize}
\item  $H$ is the Hilbert space of gradient vector fields
$\overline{\Image(\nabla)} \subset L^2(TS, d\mu)$,
\item $N_sM$ is the normal space to $S \subset M$ at $s \in S$ and
\item $P_2(N_sM)$ is the metric cone of probability measures on $N_sM$
with finite second moment, equipped with the $2$-Wasserstein metric.
\end{itemize}
\end{theorem}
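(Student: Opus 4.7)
The plan is to use Gigli's description of the tangent cone $T_\mu P(M)$ in terms of probability measures on $TM$: an element of $T_\mu P(M)$ is represented by a measure $\pi$ on $TM$ with projection $\mu$ on $M$ and $\int |v|^2 \, d\pi(v) < \infty$ such that $t \mapsto (\exp(t \cdot))_*\pi$ is a Wasserstein geodesic for small $t$, with $\|[\pi]\|^2 = \int |v|^2 \, d\pi(v)$. Since $\supp \mu = S$, such a $\pi$ is automatically concentrated on $TM|_S$, and I would disintegrate it as $\pi = \int_S \pi_s \, d\mu(s)$ with each $\pi_s \in P_2(T_sM)$. The orthogonal splitting $T_sM = T_sS \oplus N_sM$ then writes $v = v^T + v^\perp$.

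The key structural step is to show that the tangential component is essentially deterministic and a gradient: there is a unique $V \in H$ such that $v^T = V(s)$ for $\pi$-almost every $(s,v)$. Concretely, I would first observe that pushing $\pi$ forward along the tangential projection $TM|_S \to TS$ yields a tangent plan on the Wasserstein space $P(S)$ at $\mu$. Since $\mu$ is absolutely continuous on $S$, Gigli's theorem identifies $T_\mu P(S)$ with the Hilbert space $H = \overline{\Image(\nabla)} \subset L^2(TS, d\mu)$, and in particular every tangent plan on $S$ is concentrated on the graph of a single gradient vector field. Applied to the projected plan, this forces $v^T$ to be a deterministic function of $s$ lying in $H$. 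After stripping off the tangential factor, the remainders form conditional measures $\nu_s \in P_2(N_sM)$ subject only to the global second-moment bound $\int_S \int_{N_sM} |w|^2 \, d\nu_s(w) \, d\mu(s) < \infty$; no further constraints arise because, absent any background $\mu$-mass in normal directions, mass may leave $S$ in arbitrary normal directions without destroying first-order cyclical monotonicity.

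Finally, the Pythagorean identity gives $\int |v|^2 \, d\pi = \|V\|^2_{L^2(TS,d\mu)} + \int_S \int_{N_sM} |w|^2 \, d\nu_s(w) \, d\mu(s)$, which is precisely the squared norm on $H \oplus \int_S P_2(N_sM) \, d\mu(s)$. This establishes both the orthogonality of the two summands and the isometric identification~(\ref{1.2}). Conversely, any pair $(V, (\nu_s))$ can be assembled into a tangent plan, yielding surjectivity, and completeness of the right-hand side follows from closedness of $H$ and completeness of the bundle Wasserstein space in its natural $L^2$ metric. The main obstacle is the structural step of the second paragraph: justifying that tangential spreading of velocities is genuinely suboptimal requires a rearrangement argument compatible with the simultaneous normal motion, and passing from pointwise a.e.\ gradients to the closure $H$ requires an approximation argument invoking Gigli's regularity criterion on $S$.
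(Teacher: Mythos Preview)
Your overall architecture matches the paper's: identify $T_\mu P(M)$ with the $W_\mu$-completion of $\Dir_\mu$ via Gigli, then show that for $\nu\in\Dir_\mu$ the tangential part of the velocity is $\mu$-a.e.\ a single gradient on $S$ while the normal part is unconstrained, and read off the metric splitting. The difference is in how you justify the ``key structural step,'' and that is where there is a genuine gap.

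You write that pushing $\nu$ forward under the tangential projection $T_SM\to TS$ ``yields a tangent plan on $P(S)$ at $\mu$,'' and then invoke the known Hilbert-space description of $T_\mu P(S)$. But this projection statement is precisely the content of the theorem and is not an observation: that $t\mapsto(\exp^M_{\,\cdot}(t\,\cdot))_*\nu$ is a $W_2$-geodesic in $P(M)$ does not, on its face, say anything about $t\mapsto(\exp^S_{\,\cdot}(t\,\cdot))_*\bigl((p^T)_*\nu\bigr)$ being a geodesic in $P(S)$; these are different exponential maps, different distances, and the cyclical monotonicity one has is for the cost $\tfrac12 d_M^2$ on $S\times M$, not $\tfrac12 d_S^2$ on $S\times S$. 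The paper handles this directly (Proposition~3.14(b)): from Kantorovich duality for the cost on $S\times M$ one extracts a $c$-convex potential $f$ on $S$ with $(s,\exp_s(\epsilon v))$ in the $c$-subdifferential set; a first-variation computation along curves \emph{in $S$} then forces $v^T\in\nabla^-_sf$. That first-variation step is exactly what converts optimality in $M$ into a subgradient condition on $S$, and it is missing from your outline.

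The converse direction has the same issue. Saying that normal spreading ``does not destroy first-order cyclical monotonicity'' is a heuristic; what one actually needs is that for any semiconvex $f$ on $S$ and any compactly supported $\nu$ with $v^T\in\nabla^- f$, the induced curve is a geodesic in $P(M)$ for small $t$. The paper proves this (Proposition~3.14(a)) by showing that $\epsilon f$ is $c$-convex for the $S\times M$ cost when $\epsilon$ is small (Lemma~3.6), which is a nontrivial estimate. Finally, note that the Pythagorean identity for the \emph{norm} $\int|v|^2\,d\nu$ does not by itself give the metric splitting: one must compute the fiberwise Wasserstein distance $W_\mu^2(\nu^1,\nu^2)$ between two directions and see it decompose as an $L^2$-distance of tangential gradients plus $\int_S W_2^2(\nu^{1,N}_s,\nu^{2,N}_s)\,d\mu(s)$, as in the paper's equation for $W_\mu^2$.
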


The homotheties in the metric cone structure on $P_2(N_sM)$ arise from
radial rescalings of $N_sM$. The direct sum and integral in (\ref{1.2}) refer
to computing square distances.

The proof of Theorem \ref{theorem1.1} 
amounts to understanding optimal transport
starting from a measure supported on a submanifold. This seems to be a
natural question in its own right which has not been considered much.
Gangbo and McCann proved results about
optimal transport between measures supported on hypersurfaces in 
Euclidean space \cite{Gangbo-McCann (2000)}. 
McCann-Sosio and Kitagawa-Warren gave more refined results about optimal
transport between two measures supported on a sphere
\cite{Kitagawa-Warren (2012),McCann-Sosio (2011)}.
Castillon considered optimal transport between a measure supported on a
submanifold of Euclidean space and a measure supported on a linear subspace
\cite{Castillon (2010)}.

In the setting of Theorem \ref{theorem1.1}, 
a Wasserstein geodesic $\{\mu_t\}_{t \in [0,
\epsilon]}$ starting from $\mu$ consists of a family of geodesics shooting
off from $S$ in various directions. 
The geometric meaning of Theorem \ref{theorem1.1} 
is that the tangential component
of these directions is the gradient of a function on $S$. To motivate
this statement, in Section \ref{section2} we give a Benamou-Brenier-type
variational approach to the problem of optimally tranporting a
measure supported on one hypersurface to a measure supported on a disjoint
hypersurface, through a family of measures supported on hypersurfaces.
One finds that the only constraint is the aforementioned tangentiality
constraint. The rigorous proof of Theorem \ref{theorem1.1} is in 
Section \ref{section3}.

The structure of this paper is as follows.  In Section \ref{section2} we
give a formal derivation of the equation for optimal transport between
two measures supported on disjoint hypersurfaces of a Riemannian manifold.
The derivation is based on a variational method. In Section \ref{section3}
we prove Theorem \ref{theorem1.1}.

I thank C\'edric Villani for helpful comments, and
Robert McCann for references to the literature. I thank the
referee for his/her remarks.

\section{Variational approach} \label{section2}

Let $M$ be a smooth closed Riemannian manifold. Let
$S$ be a smooth closed manifold and let $S_0, S_1$ be
disjoint codimension-one submanifolds of $M$ diffeomorphic to $S$.
Let $\rho_0 \dvol_{S_0}$ and $\rho_1 \dvol_{S_1}$ be smooth
probability measures on $S_0$ and $S_1$, respectively.
We consider the problem of optimally transporting 
$\rho_0 \dvol_{S_0}$ to $\rho_1 \dvol_{S_1}$ through
a family of measures supported on codimension-one
submanifolds $\{S_t\}_{t \in [0,1]}$.
We will specify the intermediate submanifolds to be level sets of
a function $T$, which in turn will become one of the
variables in the optimization problem.

We assume that there is a
codimension-zero submanifold-with-boundary $U$ of $M$,
with $\partial U = S_0 \cup S_1$.
We also assume that there is a smooth submersion $T : U \rightarrow [0,1]$
so that $T^{-1}(0) = S_0$ and $T^{-1}(1) = S_1$.
For $t \in [0,1]$, put $S_t = T^{-1}(t)$. 
These are the intermediate hypersurfaces.

We now want to describe a family of measures
$\{\mu_t\}_{t \in [0,1]}$ that live
on the hypersurfaces
$\{S_t\}_{t \in [0,1]}$. It is convenient to think of these
measures as fitting together to form a measure on $U$.
Let $\mu$ be a
smooth measure on $U$. 
In terms of the fibering $T : U \rightarrow [0,1]$, decompose
$\mu$ as 
$\mu = \mu_t dt$ with $\mu_t$ a measure on $S_t$.
We assume that $\mu_0 = \rho_0 \dvol_{S_0}$ and 
$\mu_1 = \rho_1\dvol_{S_1}$.

Let $V$ be a vector field on $U$. 
We want the flow $\{\phi_s\}$ of $V$ to send level sets
of $T$ to level sets.
Imagining that there is an external clock, it's convenient
to think of $S_t$ as the evolving hypersurface at time $t$.
Correlating the flow of $V$ with the clock gives the constraint
\begin{equation} \label{2.1}
VT=1.
\end{equation}
Then
$\phi_s$ maps $S_t$ to $S_{t+s}$.

We also want the flow to be compatible with the measures
$\{\mu_t\}_{t \in [0,1]}$ in the sense that $\phi_s^* \mu_{t+s} = \mu_t$.
Now $\phi_s^* dT = d \phi_s^* T =  d(T+s) = dT$, so it is equivalent
to require that $\phi_s^*$ preserves the measure $\mu = 
\mu_t dt$. This gives the constraint
\begin{equation} \label{2.2}
{\mathcal L}_V \mu = 0.
\end{equation}
In particular, each $\mu_t$ is a probability measure.

To define a functional along the lines of Benamou and Brenier
\cite{Benamou-Brenier (2000)},
put 
\begin{equation} \label{2.3}
E = \frac12 \int_U |V|^2 \: d\mu =
\frac12 \int_0^1 \int_{S_t} |V|^2 \: d\mu_t \: dt. 
\end{equation}
We want to minimize $E$ under the constraints
${\mathcal L}_V \mu = 0$, $VT = 1$,
$\mu_0 = \rho_0 \dvol_{S_0}$ and
$\mu_1 = \rho_1 \dvol_{S_1}$.
Let
$\phi$ and $\eta$ be new functions on $U$, which
will be Lagrange multipliers for the constraints.
Then we want to extremize
\begin{equation} \label{2.4}
{\mathcal E} = \int_U \left[ \frac12 |V|^2 \: d\mu
+ \phi {\mathcal L}_V d\mu + \eta (VT-1) d\mu \right] 
\end{equation}
with respect to $V$, $\mu$, $\phi$ and $\eta$.

We will use the equations
\begin{align} \label{2.5}
\int_U \phi {\mathcal L}_V d\mu
= & \int_U \left[ 
{\mathcal L}_V(\phi d\mu)
- ({\mathcal L}_V \phi) d\mu
 \right] \\
= & - \int_U 
(V \phi) d\mu
+ \int_{S_1} \phi(1) d\mu_1 -
\int_{S_0} \phi(0) d\mu_0 \notag
\end{align}
and
\begin{align} \label{2.6}
\int_U \eta VT d\mu = &
\int_U \left[ {\mathcal L}_V (T \eta d\mu) - 
T {\mathcal L}_V (\eta d\mu) \right] \\
= &- \int_U T {\mathcal L}_V (\eta d\mu) 
+ \int_{S_1} \eta(1) d\mu_1.  \notag
\end{align}

The Euler-Lagrange equation for $V$ is
\begin{equation} \label{2.7}
V - \nabla \phi + \eta \nabla T = 0.
\end{equation}
The Euler-Lagrange equation for $\mu$ is
\begin{equation} \label{2.8}
\frac12 |V|^2 - V\phi = 0.
\end{equation}
Varying $T$ gives
\begin{equation} \label{2.9}
0 = {\mathcal L}_V(\eta d\mu) = (V\eta) d\mu,
\end{equation}
so the Euler-Lagrange equation for $T$ is
\begin{equation} \label{2.10}
V\eta = 0.
\end{equation}

Substituting (\ref{2.7}) into (\ref{2.8}) gives
$|\nabla \phi|^2 = \eta^2 |\nabla T|^2$, so
$\eta = \pm \frac{|\nabla \phi|}{|\nabla T|}$.
Then (\ref{2.7}) becomes
\begin{equation}
V = \nabla \phi \mp \frac{|\nabla \phi|}{|\nabla T|} \nabla T.
\end{equation}
Equation (\ref{2.1}) gives
\begin{equation} \label{added}
1 = \langle \nabla \phi, \nabla T \rangle \mp |\nabla \phi| \cdot
|\nabla T|.
\end{equation}
If the ``$\mp$'' is ``$-$'' then the right-hand side of
(\ref{added}) is nonpositive, which is a contradiction.
Thus 
\begin{equation}
1 = \langle \nabla \phi, \nabla T \rangle+ |\nabla \phi| \cdot
|\nabla T|
\end{equation}
and
\begin{equation}
V = \nabla \phi + \frac{|\nabla \phi|}{|\nabla T|} \nabla T.
\end{equation}
Equation (\ref{2.10}) becomes
\begin{equation}
V \frac{|\nabla \phi|}{|\nabla T|} = 0,
\end{equation}
which is equivalent to  
\begin{equation} \label{2.14}
\frac12 V |V|^2 = 0.
\end{equation}
Equation (\ref{2.14}) says that $V$ has constant
length along its flowlines.
The measure $\mu$ must still satisfy the conservation law (\ref{2.2}).

From (\ref{2.8}), the evolution of $\phi$ between level sets 
is given by
\begin{equation} \label{2.16}
V\phi = \frac12 |V|^2 = \frac12 \frac{|\nabla \phi|}{|\nabla T|}.
\end{equation}

The normal line to a level set $S_t$ is spanned by $\nabla T$.
It follows from (\ref{2.7}) that the tangential part of $V$
is the gradient of a function on $S_t$ :
\begin{equation} \label{2.17}
V_{tan} = \nabla_{S_t} \left( \phi \Big|_{S_t} \right).
\end{equation}
The normal part of $V$ is 
\begin{equation} \label{2.18}
V_{norm} = 
\frac{\langle V, \nabla T \rangle}{|\nabla T|^2} \nabla T =
\frac{1}{|\nabla T|^2} \nabla T,
\end{equation}
as must be the case from (\ref{2.1}).

The conclusion is that the tangential part of $V$ on $S_t$ is a gradient
vector field on $S_t$, while the normal part of $V$ on $S_t$ is unconstrained.

\section{Tangent cones} \label{section3}

\subsection{Optimal transport from submanifolds}

Let $M$ be a smooth closed Riemannian 
manifold.  Let $i : S \rightarrow M$ be an
embedding.

Let $\pi : TM \rightarrow M$ be the projection map.
Given $\epsilon > 0$, define
$E_\epsilon : TM \rightarrow TM$ by 
$E_\epsilon(m,v) = \left( \exp_m (\epsilon v),
d(\exp_m)_{\epsilon v} \epsilon v \right)$.  
We define $\pi^S$ and $E^S_\epsilon$ similarly, replacing $M$ by $S$.

Put $T_S M = i^* TM$, a vector bundle on $S$ with projection map
$\pi_{T_SM} : T_S M \rightarrow S$. There is an orthogonal
splitting $T_S M = TS \oplus N_S M$ into the tangential part and
the normal part. Let
$\pi_{N_SM} : N_S M \rightarrow S$
be the projection to the base of $N_SM$.
Given
$v \in TS$, let $v^T \in TS$ denote its tangential part and let
$v^\perp \in NS$ denote its normal part. Let
$p^T : T_SM \rightarrow TS$ be the orthogonal projection.

A function $F : S \rightarrow \R \cup \{\infty\}$ is {\it semiconvex}
if there is some $\lambda \in \R$ so that for all minimizing
constant-speed geodesics $\gamma : [0,1] \rightarrow S$, we have
\begin{equation} \label{3.1}
F(\gamma(t)) \le t F(\gamma(1)) + (1-t) F(\gamma(0)) - \frac12
\lambda t (1-t) d_S(\gamma(0), \gamma(1))^2
\end{equation}
for all $t \in [0,1]$.

Suppose that $F$ is a semiconvex function on $S$. Then $(s, w) \in TS$
lies in the {\em subdifferential set}
$\nabla^- F$ if for all $w^\prime \in T_sS$,
\begin{equation} \label{3.2}
F(s) + \langle w, w^\prime \rangle \le F(\exp_s w^\prime) +
o(|w^\prime|).
\end{equation}

Define the cost function $c : S \times M \rightarrow \R$ by $c(s,x) = \frac12
d(s,x)^2$. Given $\eta : M \rightarrow \R \cup \{ - \infty \}$, its
$c$-transform is the function
$\eta^c : S \rightarrow \R \cup \{\infty\}$ given by
\begin{equation} \label{3.3}
\eta^c(s) = \sup_{x \in M} \left( \eta(x) - \frac12 d^2(s,x) \right).
\end{equation}
Given $\psi : S \rightarrow \R \cup \{ \infty \}$, its
$c$-transform is the function 
$\psi^c : M \rightarrow \R \cup \{ - \infty\}$ given by
\begin{equation} \label{3.4}
\psi^c(x) = \inf_{s \in S} \left( \psi(s) + \frac12 d^2(s,x) \right).
\end{equation}
A function $\psi : S \rightarrow \R \cup \{ \infty \}$ is 
{\em $c$-convex}
if $\psi = \eta^c$ for some 
$\eta : M \rightarrow \R \cup \{ - \infty \}$.
A function $\eta : M \rightarrow \R \cup \{ -\infty \}$ is 
{\em $c$-concave}
if $\eta = \psi^c$ for some 
$\psi : S \rightarrow \R \cup \{ \infty \}$.

From \cite[Proposition 5.8]{Villani (2009)},
a function $F : S \rightarrow \R \cup \{- \infty\}$ is 
$c$-convex if and only if $F = (F^c)^c$, i.e. for all $s \in S$,
\begin{equation} \label{3.5}
F(s) = \sup_{x \in M} \inf_{s^\prime \in S}
\left( F(s^\prime) + \frac12 d^2(s^\prime,x) -
\frac12 d^2(s,x)  \right).
\end{equation}

The next lemma appears in \cite[Lemma 2.9]{Gigli (2011)} when $S=M$.

\begin{lemma} \label{lemma3.6}
If $F : S \rightarrow \R \cup \{\infty\}$ is a semiconvex function
then there is some $\epsilon > 0$ so that $\epsilon F$ is
$c$-convex.
\end{lemma}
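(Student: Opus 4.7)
The plan is to verify $c$-convexity of $\epsilon F$ by showing $(\epsilon F)^{cc}(s_0) \ge \epsilon F(s_0)$ for every $s_0 \in S$; the reverse inequality is automatic. Unwinding the two $c$-transforms, this reduces to producing, for each $s_0$, a point $x = x(s_0) \in M$ such that
\begin{equation*}
\epsilon F(s') + \tfrac{1}{2} d^2(s',x) \;\ge\; \epsilon F(s_0) + \tfrac{1}{2} d^2(s_0,x) \qquad \text{for all } s' \in S.
\end{equation*}
Taking the infimum over $s'$ and then the supremum over $x$ yields the desired inequality.

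I would first reduce to finite-valued $F$, which on the compact $S$ is bounded (say $|F| \le K$) and Lipschitz, so the subdifferential $\nabla^- F(s_0) \subset T_{s_0} S$ is nonempty at every $s_0$, with elements of norm bounded by some uniform $L$. Pick any $w \in \nabla^- F(s_0)$, view it as an element of $T_{s_0} M$ via $TS \subset T_S M$, and set
\begin{equation*}
x \;:=\; \exp_{s_0}(\epsilon w).
\end{equation*}
The task is then to show the displayed inequality holds uniformly in $s_0$ once $\epsilon$ is small, where ``small'' depends only on $K$, $L$, the semiconvexity constant of $F$, the injectivity radius, a sectional-curvature bound for $M$, and a bound on the second fundamental form of $S \subset M$.

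I would verify the inequality in two regimes. In the local regime $d(s_0, s') \le r$, with $r$ small, parametrize $s' = \exp^S_{s_0}(u)$ for $u \in T_{s_0} S$. Semiconvexity together with $w \in \nabla^- F(s_0)$ yields the quadratic lower bound $\epsilon F(s') - \epsilon F(s_0) \ge \epsilon \langle w, u\rangle - \epsilon C |u|^2$. A second-order expansion of $\tfrac{1}{2} d_M^2(\,\cdot\,, x)$ at $s_0$ along $S$---using the Gauss-type identity $\exp_{s_0}^{-1}(\exp^S_{s_0} u) = u + \tfrac{1}{2} \II(u,u) + O(|u|^3)$, the tangentiality of $w$, and the fact that $d_M^2$ deviates from the squared Euclidean distance in $M$-normal coordinates by an $O(|\cdot|^2 \epsilon^2)$ curvature term---gives
\begin{equation*}
\tfrac{1}{2} d_M^2(s', x) - \tfrac{1}{2} d_M^2(s_0, x) \;\ge\; -\epsilon \langle w, u\rangle + \tfrac{1}{2} |u|^2 - C' \epsilon |u|^2 - D |u|^4.
\end{equation*}
Summing the two lower bounds leaves $|u|^2 \bigl(\tfrac{1}{2} - (C + C')\epsilon - D r^2\bigr) \ge 0$ once $\epsilon$ and $r$ are small. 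In the far regime $d(s_0, s') \ge r$, the triangle inequality $d_M(s', x) \ge d_M(s', s_0) - \epsilon L$ gives $d_M(s',x) \ge r/2$ (for $\epsilon L \le r/2$), so $\tfrac{1}{2} d_M^2(s_0,x) - \tfrac{1}{2} d_M^2(s',x) \le \tfrac{\epsilon^2 L^2}{2} - \tfrac{r^2}{8}$, which dominates $|\epsilon F(s') - \epsilon F(s_0)| \le 2\epsilon K$ once $\epsilon$ is sufficiently small. Compactness of $S$ makes all of these constants uniform.

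The main obstacle is the local second-order expansion of $\tfrac{1}{2} d_M^2(\,\cdot\,, x)$ restricted to $S$: one must track two separate $O(\epsilon) |u|^2$ corrections, an extrinsic one coming from $\II$ (since $\exp_{s_0}^{-1}$ and $\exp^S_{s_0}{}^{-1}$ disagree at second order on $S$) and an intrinsic one from the sectional curvature of $M$, and show that both are absorbed into the $\tfrac{1}{2} |u|^2$ gain. This is routine but calculation-heavy; the rest of the argument is structurally the same as the $S = M$ case in \cite[Lemma 2.9]{Gigli (2011)}. Conceptually, $x = \exp_{s_0}(\epsilon w)$ is the proximal point attached to the subdifferential vector $w$, and the inequality to be verified says precisely that the quadratic $-\tfrac{1}{2} d^2(\,\cdot\,, x)$ supports $\epsilon F$ from below at $s_0$.
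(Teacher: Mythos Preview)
Your proposal is correct and follows the same strategy as the paper: both set $x=\exp_{s_0}(\epsilon w)$ for $w\in\nabla^-F(s_0)$ and show that $G(s'):=\epsilon F(s')+\tfrac12 d_M^2(s',x)$ is minimized at $s'=s_0$, handling distant points via the Lipschitz bound and nearby points via second-order information. The only difference is in the packaging of the local step. Where you write out an explicit Taylor expansion of $G$ along $S$, the paper argues by contradiction: it first traps any competing minimizer $s'$ of $G$ inside the ball $B_{4\epsilon K}(s_0)$ (using the same Lipschitz/triangle-inequality estimate you use in your far regime, but with the $\epsilon$-dependent radius $4\epsilon K$ rather than a fixed $r$), and then observes that for small $\epsilon$ the function $G$ is strictly convex on that ball while $0\in\nabla^-_{s_0}G$ and $0\in\nabla^-_{s'}G$, which is impossible along the geodesic joining them. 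Your expansion is exactly what underlies that strict-convexity assertion, so the two arguments are equivalent in content. One minor simplification you may have missed: since $w\in T_{s_0}S$ is tangential and $\II(u,u)$ is normal, the cross term $\langle\epsilon w,\II(u,u)\rangle$ vanishes, so the extrinsic correction you flag enters only at order $|u|^3$ rather than $\epsilon|u|^2$; the local bookkeeping is therefore lighter than you anticipate.
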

\begin{proof}
Clearly 
\begin{equation} \label{3.7}
\epsilon F(s) \ge \sup_{x \in M} \inf_{s^\prime \in S}
\left( \epsilon F(s^\prime) + \frac12 d^2(s^\prime,x) -
\frac12 d^2(s,x)  \right),
\end{equation}
as is seen by taking $s^\prime = s$ on the right-hand side of (\ref{3.7}).
Hence we must show that for suitable $\epsilon > 0$, for all $s \in S$
we have
\begin{equation} \label{3.8}
\epsilon F(s) \le \sup_{x \in M} \inf_{s^\prime \in S}
\left( \epsilon F(s^\prime) + \frac12 d^2(s^\prime,x) -
\frac12 d^2(s,x)  \right).
\end{equation}
For this, it suffices to show that for each $s \in S$, there is
some $x \in M$ so that
\begin{equation} \label{3.9}
\epsilon F(s) \le 
\inf_{s^\prime \in S}
\left( \epsilon F(s^\prime) + \frac12 d^2(s^\prime,x) -
\frac12 d^2(s,x)  \right).
\end{equation}
That is, it suffices to show that for each $s \in S$, there is
some $x \in M$ so that for all $s^\prime \in S$, we have 
\begin{equation} \label{3.10}
\epsilon F(s) \le 
\epsilon F(s^\prime) + \frac12 d^2(s^\prime,x) -
\frac12 d^2(s,x),
\end{equation}
i.e. 
\begin{equation} \label{3.11}
\epsilon F(s) + \frac12 d^2(s,x) \le 
\epsilon F(s^\prime) + \frac12 d^2(s^\prime,x).
\end{equation}

We know that $F$ is $K$-Lipschitz for some $K < \infty$
\cite[Theorem 10.8 and Proposition 10.12]{Villani (2009)}.
Hence if $v \in \nabla^-_s F$ then $|v| \le K$.
Given $s$, choose $v \in \nabla^-_s F$ and put $x = \exp_s (
\epsilon v) \in M$. Then $d(s,x) \le \epsilon K$.

Put $G(s^\prime) =  
\epsilon F(s^\prime) + \frac12 d^2( s^\prime,x)$.
We want to show that $G(s) \le G(s^\prime)$ for all $s^\prime \in S$.
Suppose not.
Let $s^\prime$ be a minimum point for $G$; then
$G(s^\prime) < G(s)$.

We claim first that $s^\prime \in B_{4 \epsilon K}(s)$.
To see this, if $d(s,s^\prime) \ge 4 \epsilon K$ then since
\begin{equation} \label{3.12}
d(s^\prime,x) \ge d(s,s^\prime) - d(s,x) \ge
d(s,s^\prime) - \epsilon K,
\end{equation}
we have
\begin{align} \label{3.13}
\frac12 d^2(s^\prime,x) - \frac12 d^2(s,x) & \ge
\frac12 \left( d(s,s^\prime) -  \epsilon K \right)^2 - 
\frac12 \left( \epsilon K \right)^2 \\
& = \frac12 
(d(s,s^\prime) - 2 \epsilon K) \cdot d(s,s^\prime) \notag \\
& \ge \epsilon K d(s,s^\prime) \ge \epsilon(F(s) - F(s^\prime)), \notag
\end{align}
which contradicts that $G(s^\prime) < G(s)$. This proves the claim.

If $10 \epsilon K$ is less than the injectivity radius of $M$ then there is a
unique minimizing geodesic from $s$ to $x$, and its tangent
vector at $s$ is $\epsilon v$.
It follows that $0 \in \nabla_s^- G$.
Finally, since $d(s,x) \le 
\epsilon K$, we can choose an $\epsilon$ (depending on $K$, $S$ and $M$)
to ensure that $G$ is strictly 
convex on $B_{4\epsilon K}(s)$, with the latter being a totally convex set.
Considering the function $G$ along a minimizing geodesic from
$s$ to $s^\prime$, we obtain a contradiction to the assumed strict
convexity of $G$, along with the facts that $0 \in \nabla_s^- G$ and
$0 \in \nabla_{s^\prime}^- G$. 

Thus $G$ is minimized at $s$, which
implies (\ref{3.11}).
\end{proof}

Let $\nu$ be a compactly-supported probability measure on $T_S M
\subset TM$.
Let $L < \infty$ be such that the support of $\nu$ is contained in
$\{v \in T_SM \: : \: |v| \le L \}$.
Put $\mu_\epsilon = \pi_* (E_\epsilon)_* \nu$.

\begin{proposition} \label{proposition3.14}
a. Let $f$ be a semiconvex function on $S$.  Suppose that
$\nu$ is supported on $\{v \in T_SM \: : \:  v^T \in \nabla^- f \}$.
Then there is some $\epsilon > 0$ so that the $1$-parameter family of 
measures $\{\mu_t\}_{t \in [0, \epsilon]}$ is
a Wasserstein geodesic. \\
b. Given $\nu$, suppose that
for some $\epsilon > 0$, the $1$-parameter family of 
measures $\{\mu_t\}_{t \in [0, \epsilon]}$ is
a Wasserstein geodesic.  Then there is a 
semiconvex function $f$ on $S$ so that
$\nu$ is supported on $\{v \in T_SM \: : \:  v^T \in \nabla^- f \}$.\\
\end{proposition}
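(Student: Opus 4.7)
The plan is to establish (a) and (b) as the two directions of a single equivalence: for small $\epsilon > 0$, the family $\mu_t = \pi_*(E_t)_* \nu$ on $[0,\epsilon]$ is a Wasserstein geodesic iff the endpoint coupling $\Pi := (\pi_{T_SM}, \pi \circ E_\epsilon)_* \nu$ between $\mu_0$ and $\mu_\epsilon$ is supported in the $c$-subdifferential $\{(s,x) : \psi(s) + \frac{1}{2} d^2(s,x) \le \psi(s') + \frac{1}{2} d^2(s',x) \text{ for all } s' \in S\}$ of some $c$-convex Kantorovich potential $\psi$ on $S$. In both directions the potential will be $\psi = \epsilon f$.

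For (a), I would first apply Lemma~\ref{lemma3.6} to find $\epsilon_1 > 0$ so that $\epsilon_1 f$ is $c$-convex, then shrink to $\epsilon \in (0,\epsilon_1]$ with $\epsilon L < \inj(M)$. Setting $\psi = \epsilon f$, the key verification is that for each $(s,v) \in \supp(\nu)$, writing $x := \exp_s(\epsilon v)$, the function $G(s') := \psi(s') + \frac{1}{2} d^2(s',x)$ on $S$ is minimized at $s' = s$. Since $v^T \in \nabla^- f$ we have $\epsilon v^T \in \nabla^-_s \psi$; since $\epsilon L < \inj(M)$ the minimizing geodesic from $s$ to $x$ is unique with initial velocity $\epsilon v$, so the tangential gradient of $s' \mapsto \frac{1}{2} d^2(s',x)$ at $s'=s$ equals $-\epsilon v^T$; adding yields $0 \in \nabla^-_s G$. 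Strict convexity of $G$ on a totally convex ball around $s$ (from semiconvexity of $\psi$ plus the $1$-convexity of $\frac{1}{2} d^2(\cdot,x)$) combined with the far-away comparison used in the proof of Lemma~\ref{lemma3.6} then forces $s$ to be the global minimum of $G$. This places $\Pi$ in the $c$-subdifferential of $\psi$, making $\Pi$ an optimal coupling; since the geodesics $t \mapsto \exp_s(tv)$ are minimizing on $[0,\epsilon]$, the standard triangle-inequality argument upgrades optimality to the constant $W_2$-speed characterizing a Wasserstein geodesic.

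For (b), conversely, if $\{\mu_t\}_{t \in [0,\epsilon]}$ is a Wasserstein geodesic then the endpoint coupling $\Pi$ is optimal and the geodesics $t \mapsto \exp_s(tv)$ are $\nu$-a.e.\ minimizing on $[0,\epsilon]$, so Kantorovich duality yields a $c$-convex potential $\psi$ on $S$ whose $c$-subdifferential contains $\supp(\Pi)$. Because $\psi = \eta^c$ is a pointwise supremum of functions $s \mapsto \eta(x) - \frac{1}{2} d^2(s,x)$ with Hessians in $s$ uniformly bounded above as $x$ varies over $M$, $\psi$ is semiconvex, and so is $f := \psi/\epsilon$. For $(s,v) \in \supp(\nu)$ with $x = \exp_s(\epsilon v)$, the inequality $\psi(\exp_s w') + \frac{1}{2} d^2(\exp_s w', x) \ge \psi(s) + \frac{1}{2} d^2(s,x)$ combined with the smooth expansion $\frac{1}{2} d^2(\exp_s w', x) - \frac{1}{2} d^2(s,x) = -\langle \epsilon v^T, w' \rangle + O(|w'|^2)$ gives $\psi(\exp_s w') - \psi(s) \ge \langle \epsilon v^T, w' \rangle + o(|w'|)$, i.e.\ $\epsilon v^T \in \nabla^-_s \psi$, i.e.\ $v^T \in \nabla^-_s f$. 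The set $\{(s,v) \in T_SM : v^T \in \nabla^-_s f\}$ is closed (the graph of the subdifferential of a semiconvex function being closed), so $\nu$-a.e.\ membership upgrades to $\supp(\nu)$-membership.

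The main obstacle is the strict-convexity step in (a): one must verify that the nontrivial normal component $v^\perp$ does not spoil the Lemma~\ref{lemma3.6} argument. Happily $v^\perp$ enters only through the bound $d(s,x) \le \epsilon|v| \le \epsilon L$, which is precisely the estimate used to localize to a totally convex geodesic ball around $s$; the Lemma~\ref{lemma3.6} proof then goes through essentially verbatim with $L$ in place of the Lipschitz bound $K$, and the remaining care is in tracking how $\epsilon$ must depend on $L$, on the semiconvexity constant of $f$, and on the geometry of $S \subset M$.
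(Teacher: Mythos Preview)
Your proposal is correct and follows essentially the same line as the paper: part (a) is the paper's argument almost verbatim (show $G(s')=\epsilon f(s')+\tfrac12 d^2(s',x)$ has $0\in\nabla^-_s G$, localize by the far-away estimate, and use strict convexity on a small totally convex ball), and part (b) differs only cosmetically in that you use a semiconcavity/Taylor expansion of $\tfrac12 d^2(\cdot,x)$ where the paper writes out the first variation of the length of a one-parameter family of curves joining $s'(u)$ to $x$. Two small slips to fix: for the semiconvexity of $\psi=\eta^c$ you need the Hessians of $s\mapsto -\tfrac12 d^2(s,x)$ uniformly bounded \emph{below} (not above), and your ``smooth expansion'' in (b) should be the one-sided inequality $\le$ coming from semiconcavity (or else first shrink $\epsilon$, as the paper does, so that $d^2(\cdot,x)$ is genuinely smooth near $s$).
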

\begin{proof}
a. For $t > 0$, define $\eta_t : M \rightarrow \R$ by
$\eta_t = (tf)^c$. From Lemma \ref{lemma3.6}, if $t$ is small enough then
$tf$ is $c$-convex. It follows from 
\cite[Proposition 5.8]{Villani (2009)} that
$(\eta_t)^c = tf$.

From \cite[Theorem 5.10]{Villani (2009)}, 
if a set $\Gamma_t \subset S \times M$
is such that $\eta_t(x) = tf(s) + \frac12 d^2(s,x)$ for all
$(s,x) \in S \times M$ then any
probability measure $\Pi_t$ with support in $\Gamma_t$ is an optimal
transport plan. 
We take
\begin{equation} \label{3.15}
\Gamma_t = \{(s,x) \in S \times M \: : \: \eta_t(x) = tf(s) + \frac12 d^2(s,x) \}.
\end{equation}
Now $\eta_t(x) = tf(s) + \frac12 d^2(s,x)$ if
for all $s^\prime \in S$, we have
\begin{equation} \label{3.16}
t f(s) + \frac12
d^2(s,x) \le  t f(s^\prime) + \frac12
d^2(s^\prime,x).
\end{equation}

To prove part a. of the proposition, 
it suffices to show that for all sufficiently
small $t$, equation (\ref{3.16}) is satisfied for 
$s,s^\prime \in S$ and $x = \exp_s (tv)$, where
$v \in T_sM$ lies in the support of $\nu$ and 
satisfies $v^T \in  \nabla^- f$.

Given $s$ and $v$, we know that $d(s,x) \le tL$. Put 
$G(s^\prime) = t f(s^\prime) +
\frac12 d^2(s^\prime, x)$. Let $s^\prime$ be a minimum point of
$G$ and suppose, to get a contradiction, that $G(s^\prime) < G(s)$.

Let $K < \infty$ be the Lipschitz constant of $f$.
We claim first that 
$s^\prime \in B_{t(2K+2L)}(s)$.
To see this, if $d(s,s^\prime) \ge t(2K+2L)$ then
\begin{equation} \label{3.17}
d(s^\prime,x) \ge d(s,s^\prime) - d(s,x) \ge
d(s,s^\prime) - tL
\end{equation}
and
\begin{align} \label{3.18}
\frac12 d^2(s^\prime,x) - \frac12 d^2(s,x) & \ge
\frac12 \left( d(s,s^\prime) - tL \right)^2 - (tL)^2 \\
& = \frac12 (d(s,s^\prime) - 2tL) \cdot d(s,s^\prime) \notag \\
& \ge tK d(s,s^\prime) \ge t(f(s) - f(s^\prime)), \notag
\end{align}
which is a contradiction and proves the claim.

There is some $\epsilon > 0$ (depending on $L$, $S$ and $M$)
so that if $t \in [0, \epsilon]$ then we are ensured that
there is a unique minimizing geodesic from $s$ to $x$, and its
tangent vector at $s$ is $tv$. It follows that $0 \in \nabla_s^- G$.
Finally, since $d(s,x) \le \epsilon L$, we can choose $\epsilon$
(depending on $K$, $L$, $S$ and $M$) to ensure that $G$ is strictly
convex on $B_{t(2K+2L)}(s)$, the latter being totally convex. 
Considering the function $G$ along a
minimizing geodesic from $s$ to $s^\prime$, we obtain a contradiction
to the assumed strict convexity of $G$, along with the facts that
$0 \in \nabla_s^- G$ and $0 \in \nabla_{s^\prime}^- G$.
This proves part (a) of the proposition.

Now suppose that $\{\mu_t\}_{t \in [0,\epsilon]}$ is a Wasserstein
geodesic.  From \cite[Theorem 5.10]{Villani (2009)}, there is a $c$-convex
function $\epsilon f$ on $S$ so that if we define 
its conjugate $(\epsilon f)^c$ using (\ref{3.4}) then
$\{(s, \exp_s(\epsilon v)\}_{(s,v) \in \supp(\nu)}$
is contained in 
\begin{equation} \label{3.19}
\Gamma_\epsilon = 
\left\{(s,x) \in S \times M \: : \: (\epsilon f)^c(x) =
\epsilon f(s) + \frac12 d^2(s,x) \right\}.
\end{equation}
That is, for all $s^\prime \in S$,
\begin{equation} \label{3.20}
\epsilon f(s) + \frac12 d^2(s,\exp_s(\epsilon v)) 
\le \epsilon f(s^\prime) + 
\frac12 d^2(s^\prime,\exp_s(\epsilon v)).
\end{equation}

Without loss of generality, we can shrink $\epsilon$ as desired.
Define a curve in $S$ by $s^\prime(u) = \exp_s(- u w^\prime)$ where 
$w^\prime \in T_sS$, $u$ varies over
a small interval $(-\delta, \delta)$ and $\exp_s$ denotes here the
exponential map for the submanifold $S$.
Let $\{\gamma_u : [0, \epsilon] \rightarrow M \}_{u \in 
(- \delta, \delta)}$ be a smooth
$1$-parameter family with $\gamma_0(t) = \exp_s(tv)$,
$\gamma_u(0) = s^\prime(u)$ and $\gamma_u(\epsilon) =
\exp_s(\epsilon v)$. Let $L(u)$ be the length of $\gamma_u$.
Then 
\begin{equation} \label{3.21}
\epsilon f(s) + \frac12 d^2(s,\exp_s(\epsilon v)) 
\le \epsilon f(s^\prime(u)) + 
\frac12 L^2(u).
\end{equation}
By the first variation formula, 
\begin{equation} \label{3.22}
\frac{d}{du} \Big|_{u=0} \frac12 L^2(u) = \epsilon \langle v^T, 
w^\prime \rangle.
\end{equation}
It follows that $\epsilon v^T \in \nabla_s^- (\epsilon f)$, so
$v^T \in \nabla_s^- f$.
\end{proof}

\begin{remark}
The phenomenon of possible nonuniqueness, 
in the normal component of the optimal transport between two measures supported
on convex hypersurfaces in Euclidean space, was recognized in
\cite[Proposition 4.3]{Gangbo-McCann (2000)}.
\end{remark}

\begin{example} \label{example3.23} Put $M = S^1 \times \R$. (It is noncompact, but this will
be irrelevant for the example.) Let $F \in C^\infty(S^1)$ be a positive
function. Put $S = \{(x, F(x)) : x \in S^1\}$. 
Define $p : S \rightarrow S^1 \times \{0\}$ by
$p(x, F(x)) = (x,0)$.
Let $\mu_0$ be a
smooth measure on $S$. Put $\mu_1 = p_* \mu_0$. The Wasserstein
geodesic from $\mu_0$ to $\mu_1$ moves the measure down along
vertical lines.  Defining $f$ on $S$ by
$f(x, F(x)) = - \frac12 \left( F(x) \right)^2$, one finds that
$v^T = \nabla f$. Compare with \cite[Corollary 2.6]{Castillon (2010)}.
\end{example}

\subsection{Tangent cones}

If $X$ is a complete length space with Alexandrov curvature bounded below
then one can define the tangent cone $T_xX$ at $x \in X$ as follows.
Let $\Sigma_x^\prime$ be the space of equivalence classes of minimal geodesic
segments emanating from $x$, with the equivalence relation identifying
two segments if they form a zero angle at $x$ (which means that one
segment is contained in the other). The metric on $\Sigma_x^\prime$ is
the angle.  By definition, the space of directions $\Sigma_x$ is the
metric completion of $\Sigma_x^\prime$. The tangent cone $T_xX$ is the
union of $\R^+ \times \Sigma_x$ and a ``vertex'' point, with the metric
described in \cite[\S 10.9]{Burago-Burago-Ivanov (2001)}.

If $X$ is finite-dimensional then one can also describe $T_x X$ as the
pointed Gromov-Hausdorff limit $\lim_{\lambda \rightarrow \infty}
\left( \lambda X, x \right)$. This latter description doesn't make sense
if $X$ is infinite-dimensional, whereas the preceding definition does.

If $M$ is a smooth compact connected Riemannian manifold, and it has
nonnegative sectional curvature, then $P(M)$ has nonnegative Alexandrov
curvature and one can talk about a tangent cone $T_{\mu} P(M)$
\cite[Appendix A]{Lott-Villani (2009)}. If
$M$ does not have nonnegative sectional curvature then $P(M)$ will not
have Alexandrov curvature bounded below.  Nevertheless, one can still
define $T_{\mu} P(M)$ in the same way \cite[Section 3]{Ohta (2009)}.

As a point of terminology, what is called a tangent cone here, and in
\cite{Lott-Villani (2009)},
is called the ``abstract tangent space'' in
\cite{Gigli (2011)}. The linear part of the tangent cone is called the
``tangent space'' in \cite{Ambrosio-Gigli (2008)} and the
``space of gradients'' or ``tangent vector fields'' in \cite{Gigli (2011)}.

A minimal geodesic segment emanating from $\mu \in P(M)$ is determined by
a probability measure $\Pi$ on the space of constant-speed minimizing geodesics
\begin{equation}
\Gamma = \{ \gamma : [0,1] \rightarrow M \: : \: L(\gamma) =
d_M(\gamma(0), \gamma(1)) \},
\end{equation}
which has the property that under the time-zero 
evaluation $e_0 : \Gamma \rightarrow M$,
we have $(e_0)_* \Gamma = \mu$ 
\cite[Section 2]{Lott-Villani (2009)}. The corresponding geodesic segment
is given by $\mu_t = (e_t)_* \Pi$, where $e_t : \Gamma \rightarrow M$ 
is time-$t$ evaluation.

Using this characterization of minimizing geodesic segments, one can
describe $T_{\mu} P(M)$ as follows. With $\pi : TM \rightarrow M$ being
projection to the base, put 
\begin{equation}
P_2(TM)_{\mu} = \{ \nu \in P_2(TM) \: : \: \pi_* \nu = \mu \},
\end{equation}
where $P_2$ refers to measures with finite second moment.
Given $\nu^1, \nu^2 \in P_2(TM)_\mu$, decompose them as
\begin{equation}
\nu^i = \int_M \nu^i_m \: d\mu(m), 
\end{equation} 
with $\nu^i_m \in P_2(T_mM)$. Define $W_{\mu} (\nu^1, \nu^2)$ by
\begin{equation}
W_{\mu}^2 (\nu^1, \nu^2) = \int_M W_2^2(\nu^1_m, \nu^2_m) \: d\mu(m).
\end{equation}
Let $\Dir_\mu$ be the set of elements $\nu \in P_2(TM)_\mu$ with the
property that $\{\pi_* (E_t)_* \nu \}_{t \in [0, \epsilon]}$ describes
a minimizing Wasserstein geodesic for some $\epsilon$.  Then
$T_{\mu} P(M)$ is isometric to the metric completion of $\Dir_{\mu}$ 
with respect to $W_\mu$ \cite[Theorem 5.5]{Gigli (2011)}.

We note that since $M$ is compact, any element of $\Dir_\mu$ has compact
support.  This is because for $\nu$-almost all $v \in TM$, the
geodesic $\{\exp_{\pi(v)} tv \}_{t \in [0, \epsilon]}$ must be
minimizing \cite[Proposition 2.10]{Lott-Villani (2009)}, so
$|v| \le \epsilon^{-1} \diam(M)$. \\ \\
{\it Proof of Theorem \ref{theorem1.1} : } From Proposition 
\ref{proposition3.14}, 
$\Dir_{\mu}$ is the set of compactly-supported measures 
$\nu \in P(T_SM) \subset P(TM)$ so that $\pi_* \nu = \mu$ and
there is a semiconvex function $f$ on $S$ such that
$\nu$ has support on $\{v \in T_SM \: : \: v^T \in \nabla^- f\}$.
Because $\mu$ has full support on $S$ by assumption, $\nabla^- f$ is
single-valued at $\mu$-almost all $s \in S$. Equivalently, there is a
compactly-supported $\nu^N \in P(N_SM)$, which decomposes under
$\pi_{N_SM} \: : \: N_SM \rightarrow S$ as
$\nu^N = \int_S \nu^N_s \: d\mu(s)$ with
$\nu^N_s \in P_2(N_sM)$, so that for all 
$F \in C(T_SM) = C(TS \oplus N_SM)$,
we have
\begin{equation}
\int_{T_SM} F \: d\nu = \int_S \int_{N_sM} F(\nabla^-f(s), w) \:
d\nu^N_s(w) \: d\mu(s). 
\end{equation}
Given two such measures $\nu^1, \nu^2$, it follows that
\begin{equation} \label{w2}
W^2_{\mu}(\nu^1, \nu^2) = \int_S \langle \nabla^-f^1, \nabla^-f^2 \rangle
\: d\mu + \int_S W_2^2(\nu^{1,N}_s, \nu^{2,N}_s) \: d\mu(s).
\end{equation}
Upon taking the metric completion of $\Dir_{\mu}$, the tangential term 
in (\ref{w2}) gives
the closure of the space of gradient vector fields
in the Hilbert space $L^2(TS, d\mu)$ of
square-integrable sections of $TS$ 
\cite[Proposition A.33]{Lott-Villani (2009)}.
The normal term gives
$\int_{s \in S} P_2(N_sM) \: d\mu(s)$, where the metric comes from
the last term in (\ref{w2}). This proves the theorem. \qed 

\begin{remark}
In Section 2 we considered transports in which the intermediate
measures were supported on hypersurfaces. This corresponds to
Wasserstein geodesics starting from $\mu$ for which the initial
velocity, as an element of $T_\mu P(M)$, comes from a section of
$T_SM$. In terms of Theorem \ref{theorem1.1}, this means that the data
for the initial velocity consisted of a gradient vector field 
$\nabla \phi$ on $S$
and a section ${\mathcal N}$ of $N_SM$, with the element of
$P_2(N_sM)$ being the delta measure at ${\mathcal N}(s)$.
\end{remark}

\subsection{Gauss map as an optimal transport map}

In this subsection, which is an addendum to the preceding subsections,
we give an example of optimal transport coming from the Gauss map of 
a convex hypersurface in $\R^n$.

Let $S$ be the boundary of a compact convex subset of $\R^n$. We assume
that near any point, $S$ is locally the graph of a $C^2$-regular function.  Let
$N : S \rightarrow S^{n-1}$ be the outward unit normal.  
Let $\kappa \in C^0(S)$ be the Gaussian curvature function, the
product of the principal values. Then
$N_* (\kappa \dvol_S) = \dvol_{S^{n-1}}$.

The optimal transport plans in $\R^n$ for the cost function
$\frac12 |m_1-m_2|^2$ are the same as those for
the cost function $- \langle m_1, m_2 \rangle$.
Given $R > 0$, $s \in S$ and $x \in S^{n-1}$, the cost 
function of the points $s$ and $Rx$ becomes $- \: R \langle s,x \rangle$.
Considering an optimal transport problem between $S$ and
$R \cdot S^{n-1}$,
the optimal transport plans for the cost function 
$- \: R \langle s,x \rangle$ are the same as those for the cost function
$- \: \langle s,x \rangle$.
This motivates considering the cost function 
$c : S \times S^{n-1} \rightarrow \R$ 
given by $c(s,x) \: = \: - \langle s, x \rangle$.
Here we imagine taking $R \rightarrow \infty$ so that 
$S^{n-1}$ is a ``sphere at infinity'', not
an embedded sphere in $\R^n$, although when we write $\langle s, x \rangle$
we are treating $x$ as a unit vector.

The analog of (\ref{3.15}) is
\begin{equation} 
\Gamma_t = \{(s,x) \in S \times S^{n-1} \: : \: 
\eta_t(x) = tf(s) - \langle s,x \rangle \}.
\end{equation}
Now $\eta_t(x) = tf(s) - \langle s,x \rangle$ if
for all $s^\prime \in S$, we have
\begin{equation} 
t f(s) - \langle s,x \rangle
\le  t f(s^\prime) - \langle s^\prime, x \rangle.
\end{equation}

Taking $f = 0$, one sees that for all $s \in S$ we have
$(s, N(s)) \in \Gamma_1$, since the convexity of $S$ implies that
$\langle s^\prime - s, N(s) \rangle \le 0$ for all $s^\prime \in S$.
Hence $N$ is an optimal transport map from the measure $\kappa \dvol_S$ 
on $S$, to the measure $\dvol_{S^{n-1}}$ on $S^{n-1}$.

\begin{remark}
In a different direction, 
Aleksandrov's problem of realizing a given curvature function
was related to optimal transport on a sphere in \cite{Oliker (2007)}, 
using a
certain cost function; see also
\cite{Bertrand (2015)}.
\end{remark}

\end{document}